\newcounter{counter}
\numberwithin{counter}{section}
\newtheorem{theorem}[counter]{Theorem}
\newtheorem{lemma}[counter]{Lemma}
\newtheorem{definition}[counter]{Definition}
\newtheorem{corollary}[counter]{Corollary}
\numberwithin{equation}{section}
\theoremstyle{definition}
\newtheorem{remark}[counter]{Remark}
\newcommand{\fc}{\mathcal{F}}
\newcommand{\pr}{\mathbb{P}}
\newcommand{\ex}{\mathbb{E}}
\newcommand{\eps}{\varepsilon}
\title{\textsc{A Dynkin Game with Independent Processes and Private Information}}
\author{
\textsc{Georgy Gaitsgori} 
\thanks{ 
\,\textsc{Columbia University, Department of Mathematics, 2990 Broadway, New York, NY 10027, USA} (e-mail: {\it gg2793@columbia.edu})}
\and
\textsc{Richard Groenewald}
\thanks{ 
\,\textsc{Columbia University, Department of Statistics, 1255 Amsterdam Ave, New York, NY 10027, USA} (e-mail: {\it rag2202@columbia.edu})}
}
\date{\today}
\begin{document}
\numberwithin{equation}{section}
\maketitle

\begin{abstract}
We analyze a two-player, nonzero-sum Dynkin game of stopping with incomplete information. We assume that each player observes his own Brownian motion, which is not only independent of the other player's Brownian motion but also not observable by the other player. The player who stops first receives a payoff that depends on the stopping position. Under appropriate growth conditions on the reward function, we show that there are infinitely many Nash equilibria in which both players attain infinite expected payoffs. In contrast, the only equilibrium with finite expected payoffs mandates immediate stopping by at least one of the players. Our results hold in the settings of both pure and mixed strategies.
\end{abstract}

\noindent
 {\sl AMS  2020 Subject Classification:}  Primary 91A55; Secondary 91A27, 60G40.

\noindent
 {\sl Keywords:} Nonzero-sum Dynkin games, Nash equilibrium, mixed strategies, incomplete information, stopping times.

\section{Introduction}

One object of study in the theory of optimal stopping is the so-called \textit{Dynkin game}, introduced in \cite{Dynkin_games}. In such games, two or more players observe some stochastic process, and each of them can terminate the process at some moment to maximize his payoff. The game stops when one player decides to terminate the process, whereby each player receives a payoff (possibly negative) according to the game's rules.

Since their introduction by Dynkin, such games of stopping or ``timing'' have been studied extensively by many authors, and in different settings. For a purely probabilistic approach to such games, we refer the reader to works by Bismut \cite{Bismut}, Krylov \cite{KrylovIzv}, Neveu \cite{Neveu} (section VI.6), and Morimoto \cite{Morimoto}, among others. The analytical theory of stochastic differential games with stopping times was developed in Bensoussan \& Friedman \cite{BenFriFA}, \cite{BenFriTAMS}, and Friedman \cite{Friedman}; see also Krylov \cite{KrylovDokl} and Nagai \cite{Nagai}. For more recent developments, we refer the reader to papers by De Angelis, Ferrari \& Moriarty \cite{AngFerMor}, Ekström \& Peskir \cite{EksPes}, Peskir \cite{PeskirTPA}, and the references therein. 

Dynkin games have also been connected to several other problems. To mention some, Cvitanić \& Karatzas \cite{CviKar} explore connections between Dynkin games and backward stochastic differential equations. Several papers address connections with stochastic control problems, see, e.g., Boetius \cite{Boetius}, De Angelis \& Ferrari \cite{AngFer}, and Karatzas \& Wang \cite{KarWan}. Financial applications of Dynkin games, for instance, option pricing, can be found in Kifer \cite{Kifer} and Hamadène \& Zhang \cite{HamZha}. See also Sîrbu, Pikovsky \& Shreve \cite{SirPikShr} and Sîrbu \& Shreve \cite{SirShr} for applications to the pricing of convertible bonds.

A common feature of the models in all of the above references is that players have complete information about the game. By this, we mean that not only do all players observe the same stochastic process(es), but they also know all other properties of the game, including the distributional properties of the process(es) of interest. Such assumptions are quite restrictive and do not always reflect reality. Therefore, much research activity has recently been directed toward Dynkin games with \textit{incomplete information}. 

One way to relax the \text{complete information} assumption is to consider settings with some uncertainty about the game model. For example, De Angelis, Ekström \& Glover \cite{AngEksGlo} study a two-player zero-sum Dynkin game in which both players observe the same diffusion process, but only one player has information about the drift of this diffusion. Another setting was suggested by Ekström, Glover \& Leniec in \cite{EksGloLen}, who study a two-player zero-sum Dynkin game in which players disagree about the distributional properties of the underlying stochastic process. In \cite{AngEks}, De Angelis \& Ekström study an interesting setup in which players are uncertain about their opponents' existence. For more studies of different setups in this context, we refer the reader to \cite{AngGenVil}, \cite{Gru13}, \cite{Jac22}, among others,  or a very recent result by Pérez, Rodosthenous \& Yamazaki \cite{PerRodYam}, who study an interesting stopping game between two players with asymmetric exercise opportunities.

Another, more ambitious approach toward relaxing the complete information assumption is to assume that each player has his own stream of observations, unobservable by other players. In this case, which we call the \textit{private information case}, each player has to terminate the game based only on his observations and knowledge of the law of the other player's process. As an example of such a Dynkin game, one can consider a competition between two companies who need to be the first to develop and then sell similar goods. In such a situation, each firm faces a dilemma: either to wait longer in order to produce a better product, but potentially lose customers and market share due to the other firm's entry into the market, or to produce a lower-quality product, but be the first entrant in the market. Another example would be a hiring problem, in which two firms test the same candidate for a job and need to decide whether to offer the candidate a contract before the other firm does, which carries the risk of hiring a lower-quality worker. 

Despite their theoretical and practical interest, there are very few results available about such Dynkin games, or, for that matter, about any stopping games with private information. Only one particular class of problems in this direction, the so-called contest models with ranked-based reward structures, has been extensively studied. More precisely, contest models with ranked-based reward structures are stopping games in which the final prize of the game is fixed beforehand, and the winner is determined by the rank of his stopping position relative to other contenders; i.e., the player who stops his process at the highest value wins. Such models with privately observed stochastic processes were first introduced by Seel \& Strack in \cite{SeelStrack13}, and then taken up by the same authors in \cite{SeelStrack16}, where the existence and uniqueness of a Nash equilibrium is proven for an $n$-player stopping game with the stochastic processes given by privately observed Brownian motions with drift. See also papers by Feng \& Hobson \cite{FengHob15}, \cite{FengHob16}, who extend the results of Seel \& Strack to more general diffusions and random initial positions, a paper by Hopenhayn \& Squintani \cite{HopSqu11}, who consider a similar setup in which players privately observe Poisson processes, or a paper by Nutz \& Zhang \cite{NutzZhang23}, who study stopping games with private information in the context of mean-field games.

The main object of study in the aforementioned papers is contests rather than Dynkin games, where an additional significant restriction is the fixed final prize for the winner.
To the best of our knowledge, the only results available about Dynkin games with private information and payoff functions that depend on the value of the stopping position are those in the papers by De Angelis, Merkulov \& Palczewski \cite{AngMerPal} and by Gensbittel \& Grün \cite{GenGru}. In the former, the authors study zero-sum Dynkin games in a very general framework that includes, in particular, asymmetric and private information cases. However, De Angelis et al. consider games in which players are allowed to use randomized stopping times since equilibria might not exist otherwise. In contrast, Gensbittel \& Grün study the more common setup of a two-player zero-sum Dynkin game with pure strategies, in which each player observes his own continuous-time finite state Markov chain and needs to make decisions based on his own stream of observations. 
Gensbittel \& Grün also propose, for future research, a more general setting, with observations modeled by infinite-state Markov processes. 

The present paper studies precisely this case. Namely, we consider a two-player, nonzero-sum Dynkin game with pure strategies, in which each player has access only to his own process, modeled by a Brownian motion. The player who stops first gets a reward based on the stopping position. We note here that our game is non-standard in the sense that the players' reward functions need not satisfy any uniform integrability conditions that are typical for the usual Dynkin games. In particular, under appropriate growth conditions on the payoff function, we show that there are infinitely many Nash equilibria in which both players attain infinite expected payoffs, and there are no equilibria in which one player attains an infinite expected payoff while another player receives a finite expected payoff. Moreover, we show that the only equilibrium with finite expected payoffs for both players mandates immediate stopping by at least one of the players.  
We note that the contrast between these cases is even stronger if one makes the following observation. In the infinite payoff case, there exist equilibria (including ones constructed by us in the proof of Theorem \ref{main_thm}) in which the game will terminate in finite time almost surely. However, in any such equilibrium, the expected duration of the game is infinite. This is due to a celebrated identity of Wald (see, e.g., Problem 3.2.12 in \cite{BMSC}), and further implies that if we restrict the players to stopping times with finite expectations, meaning that the expected duration of the game must be finite, the players will have to terminate the game immediately in equilibrium. Even though it is easy to understand that such pairs of stopping times indeed constitute Nash equilibria, it is more important that we are able to prove the non-existence of other Nash equilibria with finite expected payoffs. 
Moreover, we would like to emphasize that all our results hold if the players are allowed to use mixed strategies, which we discuss in Section \ref{sec_extensions}.

The structure of the paper is as follows. We introduce our model formally in Section \ref{model}. In Section \ref{sec_main_results}, we formulate our main results and outline the proofs, which are carried out in Section \ref{sec_proof_of_main_thm}. In particular, we investigate and classify possible Nash equilibria in the setting of Section \ref{model}, depending on the growth properties of the reward function $f(\cdot)$. To obtain our main results, we reduce the Dynkin game under consideration to an optimal stopping problem of a novel type, with a discount function depending on the distribution of the optimal stopping time. To the best of our knowledge such optimal stopping problems with ``endogenous discounting'', which are of separate and independent interest, have not been studied in the literature. Our methods for studying this problem rely mainly on the asymptotic analysis of the players' reward functions, which we carry out using several tools from stochastic analysis. In Section \ref{sec_extensions}, we discuss several immediate ramifications of our main results, including the classification of all Nash equilibria in the case where $f(\cdot)$ is affine, and establish several extensions, including generalizations of our results to the setting of different reward functions for each player, $n$--player games, and to games which permit mixed strategies. Section \ref{sec_technical_proofs} presents three technical lemmas used in the proof of the main results. We conclude by discussing further potential extensions in Section \ref{sec_future_research}.

\section{Model}\label{model}

We fix a probability space $(\Omega, \fc, \pr)$, and on it two independent standard Brownian motions $W^{(1)}, W^{(2)}$. For $i \in \{1,2\}$, let $\mathbb{F}^{(i)} \coloneqq \{\mathcal{F}_t^{(i)}\}_{t \ge 0}$ denote the natural filtration of $W^{(i)}$. Now assume there are two players, labeled 1 and 2, who play the following game of stopping: player $i$ chooses an $\mathbb{F}^{(i)}$--stopping time $\tau_i: \Omega \to [0, \infty]$ and receives payoff $f(x_i + W^{(i)}_{\tau_i})$ if $\tau_i < \tau_{3-i}$, where $x_i \in \mathbb{R}$ is a starting position and $f: \mathbb{R} \to \mathbb{R}$ a reward function to be specified later. Both players desire to maximize their expected payoffs, given explicitly by 
\begin{equation*}
    \ex \left[f\left(x_i + W^{(i)}_{\tau_i}\right) \left(\mathbf{1}_{\{\tau_i < \tau_{3-i}\}} + \frac{1}{2}\mathbf{1}_{\{\tau_i = \tau_{3-i} < \infty\}}\right)\right], \quad i = 1, 2.
\end{equation*}
Our goal is to find all possible Nash equilibria for such a game. 

To define the notion of Nash equilibrium formally, we introduce the following notation. Let $\mathcal{T}_i$ denote the set of all $\mathbb{F}^{(i)}$--stopping times $\tau_i$ such that $\ex [f(x_i + W^{(i)}_{\tau_i})]$ is well defined. When the players choose the strategies $\tau_1, \tau_2$ (i.e., player $i$ chooses the $\mathbb{F}^{(i)}$--stopping time $\tau_i$, $i = 1, 2$), then the expected payoffs and best possible expected payoffs are, respectively, 
\begin{equation}\label{value_functions_i}
    \begin{split}
    J_i(\tau_i, \tau_{3-i}, x_1, x_{2}) &\coloneqq \ex \left[f\left(x_i + W^{(i)}_{\tau_i}\right) \left( \mathbf{1}_{\{\tau_i < \tau_{3-i}\}} + \frac{1}{2}\mathbf{1}_{\{\tau_i = \tau_{3-i} < \infty\}}\right)\right],\\
    V_i(\tau_{3-i}, x_1, x_2) &\coloneqq \sup\limits_{\tau \in \mathcal{T}_i} J_i(\tau, \tau_{3-i}, x_1, x_2).
    \end{split}
\end{equation}

\begin{definition}
	For fixed starting positions $(x_1, x_2) \in \mathbb{R}^2$, we say that a pair of stopping times $(\tau_1, \tau_2) \in \mathcal{T}_1 \times \mathcal{T}_2$ is a Nash equilibrium for the two-player game started at $(x_1, x_2)$, if 
	\begin{equation}\label{nash_equil_definition}
    	\begin{cases}
    	J_1(\tau_1, \tau_2, x_1, x_2)  = V_1(\tau_{2}, x_1, x_2) \geq J_1(\sigma, \tau_2, x_1, x_2),  \quad \forall \ \sigma \in \mathcal{T}_1,\\
    	J_2(\tau_2, \tau_1, x_1, x_2) = V_2(\tau_{1}, x_1, x_2) \geq J_2(\sigma, \tau_1, x_1, x_2),  \quad \forall \ \sigma \in \mathcal{T}_2.
    	\end{cases}
	\end{equation}
\end{definition}

We introduce the following putative assumptions on the reward function $f(\cdot)$:
\begin{enumerate}[leftmargin=35pt, label = \textbf{(A\arabic*)}]
	\item $f(x) < 0$ for $x < 0$, and $f(x) \ge 0$ for $x \ge 0$ \label{assm_1},
	\item $\liminf_{x \to \infty} \left(f(x) / x^{\gamma}\right) > 0$, for some $\gamma > 0$, \label{assm_2} 
	\item $f(x) \leq kx + b, \, \forall \, x \in \mathbb{R}$, for some $k > 0, b \in \mathbb{R}$, \label{assm_3}
	\item $f(x) = kx, \, \forall \, x \in \mathbb{R}$, for some $k > 0$. \label{assm_4}
\end{enumerate}

We are now ready to state our main results.

\section{Main results}\label{sec_main_results}

\begin{theorem}\label{main_thm}
	Consider the two-player game described in the previous section, started at $(x_1, x_2) \in \mathbb{R}^2$. Then we have the following classification of Nash equilibria:
	\begin{enumerate}[label = \Roman*.]
		\item If $f(\cdot)$ satisfies \ref{assm_1} and \ref{assm_2}, there exist infinitely many Nash equilibria with infinite expected payoffs for both players. In particular, these equilibria can be realized by stopping times that are essentially hitting times of properly scaled square-root boundaries, specified in \eqref{square_root_st_times} below. \label{result_1}
		
		\item If $f(\cdot)$ satisfies \ref{assm_1}, \ref{assm_2} with $\gamma = 1$, and \ref{assm_3}, then there are no Nash equilibria in which one player receives a finite expected payoff, while the other receives an infinite expected payoff. \label{result_2}
		
		\item If $f(\cdot)$ satisfies \ref{assm_1} and \ref{assm_2} with $\gamma \ge 1$, and $\max(x_1, x_2) < 0$, then there exist no Nash equilibria in which both players receive finite expected payoffs. \label{result_3}
		
		\item If $f(\cdot)$ satisfies \ref{assm_4}, then there is the following classification of Nash equilibria in which both players receive a finite expected payoff:
		\begin{enumerate}
			\item There are no such equilibria in which $\pr(\tau_1 > 0) > 0$ and $ \pr(\tau_2 > 0) > 0$; \label{result_4a}
			\item A pair $(\tau_1, \tau_2)$ with $\, \pr(\tau_1 > 0) = 0$ and $\, \pr(\tau_2 > 0) = 0$ is a Nash equilibrium if, and only if, $\min(x_1, x_2) \ge 0$;\label{result_4b}
			\item There exist Nash equilibria in which $\, \pr(\tau_i > 0) > 0$ and $\, \pr(\tau_{3-i} > 0) = 0, \, i = 1, 2$, if, and only if, $$\, \min(x_1, x_2) \le 0 < \max(x_1, x_2).$$ And under this condition, there are infinitely many such Nash equilibria.\label{result_4c}
		\end{enumerate} \label{result_4}
	\end{enumerate}
\end{theorem}
\vspace{0.3cm}

The proof of Theorem \ref{main_thm} is quite long and some parts are tedious. Therefore, we would like to provide key ideas behind the proof and its outline before diving into the details.

The essential idea of the proof is to reduce
the two-player Dynkin game under consideration to a non-standard optimal stopping problem with just one Brownian motion and then analyze that reduced problem. To effect this reduction, we note that the expected payoff in \eqref{value_functions_i} can be written as
\begin{equation}\label{value_func_as_discounting}
    J_i(\tau, \tau_{3-i}, x_1, x_2) = \ex \left[ f\left(x_i + W_\tau^{(i)} \right)  c_i(\tau) \right], \quad i = 1, 2, 
\end{equation}
where 
\begin{equation}\label{def_discount_funct}
    c_i(t) \coloneqq \pr(\tau_{3-i} > t) + \frac{1}{2} \pr(t = \tau_{3-i} < \infty), \quad 0 \le t < \infty,
\end{equation}
and we adopt the convention $c_i(\infty) \equiv 0$. Indeed, the tower property for conditional expectations gives, for any $\tau \in \mathcal{T}_i$, 
\begin{equation}\label{tower_property}
    \begin{split}
        J_i(\tau, \tau_{3-i}, x_1, x_2) 
        &=
        \ex \left[
            f\left(x_i + W^{(i)}_{\tau}\right) \, \ex \left. \left[ 
            \mathbf{1}_{\{\tau < \tau_{3-i}\}} + \frac{1}{2} \mathbf{1}_{\{\tau = \tau_{3-i} < \infty\}} \right| \mathcal{F}_\tau^{(i)} 
            \right] 
            \right] 
            \\ 
            &=
            \ex\left[ 
            f\left(x_i + W_\tau^{(i)} \right) \left. \left( \pr(\tau_{3-i} > t) + \frac{1}{2} \pr(t = \tau_{3-i} < \infty) \right) \right|_{t = \tau}
        \right].
    \end{split}
\end{equation} 
Here the first equality follows from the fact that $W^{(i)}_\tau$ is $\mathcal{F}^{(i)}_{\tau}$--measurable, and the second from the fact that $\tau_{3-i}$ is independent of $\mathcal{F}^{(i)}_{\tau}$. It only remains to take the supremum on both sides. 

As a result of this reduction, to prove statements \ref{result_1}-\ref{result_4} of Theorem \ref{main_thm} it suffices to analyze the problem \eqref{value_func_as_discounting}. We note that the gain function in problem \eqref{value_func_as_discounting} is the product of a \textit{decreasing} discount function $c(\cdot)$ and an \textit{increasing} (at least at infinity under the assumptions \ref{assm_2} or \ref{assm_4}) terminal gain function $f(\cdot)$. 
We balance these conflicting effects by studying the problem \eqref{value_func_as_discounting} using some asymptotic analysis. In particular, we exploit and make precise the following two ideas. 
First, we note that if the discount functions $c_i(\cdot), \, i = 1, 2,$ decrease slowly enough, then players can obtain infinite expected payoffs by choosing hitting times of appropriate boundaries as their stopping times. 
Secondly, we show that under some circumstances, dictated by the game, players are indeed incentivized to choose stopping times whose associated discount functions decrease slowly enough. 

\subsection{Outline and Intuition}

To elaborate more and provide some details and intuition on how the aforementioned ideas work, we give the following outline of the main proof, in which all parts of Theorem \ref{main_thm} are proved sequentially.

 \underline{To prove Part \ref{result_1}}, we explicitly construct a family of symmetric Nash equilibria in which both players have infinite expected payoffs. In these equilibria, players choose stopping times which are effectively hitting times of square-root boundaries with appropriate scaling parameters. It turns out that such stopping times' distributions have heavy tails, and thus the corresponding discount functions are large enough to allow players to get infinite expected rewards. We will prove this part by first formally defining the aforementioned stopping times, then explicitly showing that they allow players to attain infinite expected rewards.

 \underline{The proof of Part \ref{result_2}} is based on the idea that, for a player to achieve an infinite expected payoff, the survival function of his stopping time must necessarily be large enough for the other player to also achieve an arbitrarily large expected payoff. Lemma \ref{unbounded_payoff_lemma}, which we state and prove in Section \ref{sec_technical_proofs}, is the key technical ingredient for this part and quantifies the last statement. The proof of this part proceeds as follows: first, we use a result by Novikov \cite{Novikov}, which gives an estimate on the tail distribution of the quadratic variation of martingales, to get a lower bound on the tail distribution of any stopping time with an infinite reward for a given player. This lower bound then allows us to use Lemma \ref{unbounded_payoff_lemma} to conclude that if one player gets an infinite expected reward, then the other player can also get an arbitrarily large expected reward.

 \underline{The proof of Part \ref{result_3}} is based on ideas similar to those in the proof of part \ref{result_2}, although different techniques are required here. In this part, we observe that if both players start below zero and have finite expected payoffs, their stopping times must again be quite large. Namely, they must be greater than the hitting times of the origin by the processes $x_i + W^{(i)}_t, \, t \ge 0$. However, appealing yet again to Lemma \ref{unbounded_payoff_lemma}, we observe that each of these hitting times is large enough to allow the other player to obtain an arbitrarily large expected payoff under an appropriate strategy. 

 \underline{Finally, the proof of Part \ref{result_4}} is again based on similar ideas as the previous parts. We show that if the players do not stop immediately, their optimal stopping times will be large enough to allow their opponent to obtain an arbitrarily large expected payoff. Appropriate lower bounds on the survival functions of the optimal stopping times will be obtained using the assumption \ref{assm_4}.
 In particular, under this assumption, one can show that the players' optimal stopping times must necessarily be bounded from below by the hitting times of some boundaries. One can also show that if the processes $x_i + W^{(i)}_t, \, t \ge 0$, do not hit the boundaries immediately, then with high probability these processes will not hit the boundaries for long enough time as well, again leaving enough time for the other player to achieve arbitrarily large expected rewards. To formalize the last statements, we establish Lemma \ref{boundary_stop_times} and Lemma \ref{devilish_lemma} in Section \ref{sec_technical_proofs}.

\section{Proof of Theorem \ref{main_thm}}\label{sec_proof_of_main_thm}

This section is split into four subsections, corresponding to the proofs of the results \ref{result_1}--\ref{result_4} of Theorem \ref{main_thm}.
	
\subsection{Proof of \texorpdfstring{\hyperref[result_1]{I}}{}}\label{subsec_proof_1}
    In this part, we explicitly construct a family of symmetric Nash equilibria in which both players achieve infinite expected payoffs.  Due to the representation of the expected payoffs \eqref{value_func_as_discounting}, it is sufficient to find stopping times $\tau_i \in \mathcal{T}_i, \, i = 1, 2,$ such that 
	\begin{equation}\label{infinite_payoffs_condition}
    	J_i(\tau_i, \tau_{3-i}, x_1, x_2) = \ex \left[  f\left(x_i + W^{(i)}_{\tau_i} \right) c_i(\tau_i) \right] = \infty, \quad i = 1, 2.
	\end{equation} 
	In this case, neither player has an incentive to deviate from his strategy, and thus such a pair $(\tau_1, \tau_2)$ is an equilibrium.  
	
	As is common in optimal stopping problems, we focus our attention on hitting times of boundaries. We start by defining $\sigma^{(i)}_\xi \coloneqq \nobreak \inf \{t \ge\nobreak 0 : W_t^{(i)} = \xi\}, \, \xi \in \mathbb{R}$, and, for a fixed $a > 0$, letting 
	\begin{equation}\label{square_root_st_times}
    	\begin{split}
    	\tau_a^{(i)} 
    	&\coloneqq 
    	\inf \left\{s \ge \sigma_{-x_i}^{(i)} : x_i + W_s^{(i)} \ge a \sqrt{s-\sigma_{-x_i}^{(i)} + 1} \right \} \\ 
    	&= 
    	\sigma_{-x_i}^{(i)} + \inf\left\{t \ge 0: W_{\sigma_{-x_i}^{(i)} + t}^{(i)} - W_{\sigma_{-x_i}^{(i)}}^{(i)} \ge a \sqrt{t+1}\right \} \\ 
    	&= 
    	\sigma_{-x_i}^{(i)} + \widetilde{\tau}_a^{(i)},
    	\end{split}
	\end{equation}
	where 
	\begin{equation}\label{stopping_time_tilde}
    	\widetilde{\tau}_a^{(i)} \coloneqq \inf\left\{t \ge 0: \widetilde W_t^{(i)} \ge a \sqrt{t+1}\right \}
	\end{equation}
	and $\,\widetilde W_t^{(i)} \coloneqq W_{\sigma_{-x_i}^{(i)} + t}^{(i)} - W_{\sigma_{-x_i}^{(i)}}^{(i)}, \,t \ge 0$ is a standard Brownian motion, independent of $\mathcal{F}_{\sigma_{-x_i}^{(i)}}^{(i)}$.
    One may verify that $\tau_a^{(i)}$ is a stopping time, by noting that, for every $t \in [0, \infty)$, we have
	\begin{equation*}
	   \left\{ \tau_a^{(i)} \leq t \right \} 
       = 
       \bigcup_{\substack{r \in \mathbb{Q} : \\ r \leq t}}\left\{\sigma^{(i)}_{-x_i} < r \right\} \cap \bigcap_{\substack{q \in \mathbb{Q}: \\ q > 0}} \bigcup_{\substack{s \in \mathbb{Q}: \\ s \in \left[r, t \right]}} \left\{x_i + W_s^{(i)} \ge a \sqrt{s- \sigma_{-x_i}^{(i)} + 1} - q \right\} \in \mathcal{F}_t^{(i)}.
	\end{equation*}
	Note that, despite its seemingly complicated form, the stopping time $\tau_a^{(i)}$ in \eqref{square_root_st_times} is just the first time the process $x_i + W^{(i)}_t$ hits the square-root boundary $a\sqrt{t+1}$ after hitting the origin. The significance of such stopping times will become clear shortly.
	
	We now claim that, for $a > 0$ large enough, the pairs $(\tau^{(1)}_{a}, \tau^{(2)}_{a})$ satisfy \eqref{infinite_payoffs_condition}. This will follow from Theorem 1 in Breiman \cite{Breiman}, which sheds light on the asymptotic behavior of hitting times of scaled square-root boundaries by a reflected Brownian motion (see also Theorem \ref{breiman_result} in Section \ref{sec_technical_proofs}, where we have stated a precise formulation of this result for the sake of completeness). Indeed, from this result, it follows that there exist a constant $c > 0$ and a function $\beta : \mathbb{R} \to (0, \infty)$ such that for $t$ large enough we have
	\begin{equation}\label{breiman_conditions}
    	\pr(\widetilde{\tau}_a^{(i)} > t - 1) \geq c \, t^{-\beta(a)}  \text{ \ with \ } \lim_{a \rightarrow \infty} \beta(a) = 0.
	\end{equation}  
	Thus, we can choose $a$ large enough such that $\beta(a) < \frac{\gamma}{4}$, where $\gamma$ is the constant from Assumption \ref{assm_2}. Note that $\tau_a^{(i)} < \infty$ holds, almost surely, by the Law of the Iterated Logarithm.  Thus, we have 
	\begin{equation} \label{infinite_objective}
    	\begin{split}
    	J_i(\tau_a^{(i)}, \tau_a^{(3-i)}, x_1, x_2) &= 
    	\ex \left[ f \left(a \sqrt{\widetilde{\tau}_a^{(i)} + 1} \right) c_i\left(\sigma_{-x_i}^{(i)} + \widetilde{\tau}_a^{(i)}\right)\right] \\ 
    	&= 
    	\ex \left[\ex \left. \left[f \left(a \sqrt{\widetilde{\tau}_a^{(i)} + 1} \right) c_i\left(u + \widetilde{\tau}_a^{(i)}\right) \right] \right|_{u = \sigma_{-x_i}^{(i)}} \right],
    	\end{split}
	\end{equation} 
	where the second equality follows from the tower property and the fact that $\sigma_{-x_i}^{(i)}$ is independent of $\widetilde{\tau}_a^{(i)}$. 
	
	We now claim that, for every fixed $u \ge 0$, we have
	\begin{equation*}
	   \ex \left[f \left(a \sqrt{\widetilde{\tau}_a^{(i)} + 1} \right) c_i\left(u + \widetilde{\tau}_a^{(i)}\right) \right] = \infty.
	\end{equation*} 
	To see this, note from \eqref{def_discount_funct} that $c_i(t) \ge \pr \left(\widetilde{\tau}_a^{(3-i)} > t \right)$.  Hence, by \eqref{breiman_conditions}, \ref{assm_1} and \ref{assm_2}, there exist constants $L > 0, M > 0$, which depend only on $a, u, c$ and the function $f(\cdot)$, such that 
	\begin{equation}\label{payoff_under_root_coundary}
    	\begin{split}
    	\ex \left[f \left(a \sqrt{\widetilde{\tau}_a^{(i)} + 1}\,  \right) c_i\left(u + \widetilde{\tau}_a^{(i)}\right) \right] &\ge
    	L \, \ex \left[ \left(\widetilde{\tau}_a^{(i)}\right)^{\frac{\gamma}{2}}\left(\widetilde{\tau}_a^{(i)}\right)^{-\beta(a)} \mathbf{1}_{\left\{\widetilde \tau_a^{(i)} > M \right\}}\right] \\ 
    	&=
    	L \int_0^\infty \pr \left( \widetilde{\tau}_a^{(i)} > M \vee  x^{1/q} \right) dx,
    	\end{split}
	\end{equation}  
	where 
	$$
	q \coloneqq \frac{\gamma}{2}-\beta(a) > 0.
	$$
	Finally, we apply \eqref{breiman_conditions} again to note that there exist some constants $L' > 0$, $R > M^{q}$, again depending only on $a, u, c, f(\cdot)$, for which the quantity in \eqref{payoff_under_root_coundary} is bounded from below by 
	\begin{equation}\label{infinite_lower_bound}
	L' \int_{R}^\infty \pr \left(\widetilde 
	\tau_a^{(i)} > x^{1/q} \right) dx 
	\ge
	L' \int_R^\infty \left(\frac{1}{x} \right)^{r(a)} dx = \infty, 
	\end{equation}  
	with
	$$
	r(a) \coloneqq \frac{2\beta(a)}{\gamma - 2\beta(a)} < 1
	$$ 
	by the choice of $a > 0$.  Combining this with \eqref{infinite_objective} concludes the proof of part \ref{result_1} \hfill \qed {\parfillskip0pt\par}
    
\subsection{Proof of \texorpdfstring{\hyperref[result_2]{II}}{}} \label{subsec_proof_2}
In this part, we show that there are no Nash equilibria in which one player receives a finite expected payoff, while the other receives an infinite expected payoff.
    We argue by contradiction. Suppose, without loss of generality, that $(\tau_1, \tau_2)$ is a Nash equilibrium where player $1$ obtains an infinite expected payoff while player $2$ obtains a finite expected payoff.  We first show that the stopping time $\tau_1$ of player $1$ is then ``large'', in the sense that the function $c_2(\cdot)$, defined by \eqref{def_discount_funct} in terms of the distribution of $\tau_1$, satisfies
	\begin{equation} \label{liminf_condition}
	\liminf_{t \rightarrow \infty} \left(c_2(t) \sqrt{t}\right) = \infty.
	\end{equation}  
	
	To see this, we note first that \eqref{liminf_condition} is trivially true if $\pr(\tau_1 = \infty) > 0$, since 
	\begin{equation*}
	c_2(t) \ge \pr(\tau_1 > t) \ge \lim_{t \rightarrow \infty} \pr(\tau_1 > t) = \pr(\tau_1 = \infty).
	\end{equation*}
	Thus, let us assume that $\tau_1 < \infty$ holds almost surely. Then \eqref{liminf_condition} follows from a result of Novikov \cite{Novikov} (see also \cite{ElLiYor}). In particular, Theorem 1 of \cite{Novikov} states that for any continuous local martingale $M$ with $\ex[M_\infty^+] < \infty$ or $\ex [M_\infty^-] < \infty$, whose quadratic variation $\langle M \rangle$ satisfies $\langle M \rangle_\infty < \infty$ almost surely, we have the following lower bound:
	\begin{equation}\label{Novikov_liminf}
	   \liminf_{t \rightarrow \infty} \left(\pr\big(\langle M \rangle_\infty > t\big) \,\sqrt{t}\right) \ge \sqrt{\frac{2}{\pi}} \, \Big | \ex [M_\infty] \Big |.
	\end{equation} 
	In our case, we consider the martingale given by the stopped Brownian motion $M_t \coloneqq x_1 + W^{(1)}_{\tau_1 \wedge t}, \, t \ge 0,$ with quadratic variation $\langle M \rangle_t = t \wedge \tau_1$, $t \ge 0$, and note that, by Assumptions \ref{assm_1} and \ref{assm_3}, along with the assumption that player 1 attains an infinite expected payoff, we have 
    \begin{equation} \label{upperboundminus}
    	\ex \left[\left(x_1 +W_{\tau_1}^{(1)} \right)^{-}\right] \leq \frac{1}{k} \left(\ex \left[\left(f\left(x_1 +W_{\tau_1}^{(1)}\right)\right)^{-}\right] + |b| \right) < \infty,
	\end{equation} 
	which makes Novikov's results applicable. Observing that
	\begin{equation} \label{lowerboundplus}
    	\ex \left[\left(x_1 +W_{\tau_1}^{(1)} \right)^{+}\right] \ge \frac{1}{k} \left(\ex \left[\left(f\left(x_1 +W_{\tau_1}^{(1)}\right)\right)^{+}\right] - |b| \right) = \infty,
	\end{equation} 
	we obtain the lower bound 
	\begin{equation*}
        \begin{split}
        	\liminf_{t \rightarrow \infty} \left(c_2(t) \sqrt{t}\right) &\ge \liminf_{t \rightarrow \infty}\left(\pr(\tau_1 > t) \sqrt{t}\right) \\*
        	&\geq \sqrt{\frac{2}{\pi}} \Big| \ex \left[x_1  + W_{\tau_1}^{(1)} \right]\Big|  \\
        	& = \infty.
        \end{split}
	\end{equation*} 
	Here the inequality in the second line is implied by \eqref{Novikov_liminf}.  
	
	As a result, we obtain \eqref{liminf_condition}, which in turn implies \eqref{condition_on_disc_func}. Therefore, we can apply Lemma \ref{unbounded_payoff_lemma}, whose conclusion contradicts the assumption that $(\tau_1, \tau_2)$ is a Nash equilibrium with finite expected payoff for the second player. This completes the proof of part \ref{result_2} \hfill \qed {\parfillskip0pt\par}
    
\subsection{Proof of \texorpdfstring{\hyperref[result_3]{III}}{}} \label{subsec_proof_3}
    In this part, we show that if $f(\cdot)$ satisfies \ref{assm_1} and \ref{assm_2} with $\gamma \ge 1$, and $\max(x_1, x_2) < 0$, then there exist no Nash equilibria in which both players receive finite expected payoffs. 
    The proof is quite technical, so we divide it into several steps. First, we prove the intuitively clear statement, that no player would stop in a negative reward region if there is a chance that the other player has not stopped yet. Formally, this means that for each $i = 1, 2$, the event $\{x_i + W^{(i)}_{\tau_i} < 0\} \cap \{ c_i(\tau_i) > 0\}$ has zero probability. Secondly, we show that this implies $c_i(t) > 0$ for all $t \geq 0$. The third step is to show that the last observation, in turn, implies that the stopping times $\tau_i$ are greater than the hitting times of the origin by the processes $x_i + W^{(i)}_\cdot$. Finally, we use Lemma \ref{unbounded_payoff_lemma} again to show that such ``large'' stopping times allow the other player to obtain an arbitrarily large expected payoff.
	
	\underline{Step 1:} Suppose that $(\tau_1, \tau_2)$ is a Nash equilibrium in which both players receive a finite expected payoff. To show that, for each $i = 1, 2$, we have
	\begin{equation}\label{prob_zero_intersection}
    	\pr\left( \{x_i + W^{(i)}_{\tau_i} < 0\} \cap \{ c_i(\tau_i) > 0\}\right) = 0
	\end{equation}
	in the notation of \eqref{def_discount_funct}, we suppose for the sake of contradiction that this is not true for the first player. However, we then claim that there exists a strategy that is strictly better than $\tau_1$ for the first player. Namely, we consider an alternative stopping time $\widetilde{\tau}_1 \coloneqq \inf \{t \ge \tau_1: x_1 + W_t^{(1)} \ge 0\}$ and note from the representation of the expected payoff \eqref{value_func_as_discounting} that the payoff of the first player under $\widetilde{\tau}_1$ can be written as 
	\begin{equation}\label{comparing_payoffs}
    	\begin{split}
    	J_1(\widetilde{\tau}_1, \tau_{2}, x_1, x_2) 
    	&= 
    	\ex \left[f\left(x_1 + W_{\widetilde{\tau}_1}^{(1)}\right)  c_1(\widetilde{\tau}_1) \right] \\
    	&\ge
    	\ex \left[f\left(x_1 + W_{\widetilde{\tau}_1}^{(1)}\right) c_1(\widetilde{\tau_1}) \, \mathbf{1}_{\{x_1+W_{\tau_1}^{(1)} \ge 0\}} \right] \\
    	&=
    	\ex \left[f\left(x_1 + W_{\tau_1}^{(1)}\right) c_1(\tau_1) \, \mathbf{1}_{\{x_1+ W_{\tau_1}^{(1)} \ge 0\}} \right]\\ 
    	&>
    	\ex \left[f\left(x_1 + W_{\tau_1}^{(1)}\right) c_1(\tau_1) \right]\\
    	&=
    	J_1(\tau_1, \tau_{2}, x_1, x_2).
    	\end{split}
	\end{equation}
	Here the inequality on the second line follows from the fact that $f(x_1 + W_{\widetilde{\tau}_1}^{(1)})$ is non-negative by the choice of $\widetilde{\tau}_1$. The equality on the third line follows since $\tau_1 = \widetilde{\tau}_1$ holds on the set $\{x_1+ W_{\tau_1}^{(1)} \geq 0\}$. The last strict inequality follows from the fact that, on the set $\{x_1 + W^{(1)}_{\tau_1} < 0\} \cap \{ c_1(\tau_1) > 0\}$ of positive probability, the payoff is negative. Therefore, the strategy $\widetilde{\tau}_1$ is strictly better than $\tau_1$, which leads to the desired contradiction.
	
	\underline{Step 2:} Next, we show that for each $i = 1, 2$, we have $c_i(t) > 0$ for all $t \ge 0$ in \eqref{def_discount_funct}. Note that if, for some $t$, we have $c_i(t) = 0$, then $\pr(\tau_{3-i} > t) = 0$ or equivalently $\pr(\tau_{3-i} \le t) = 1$. Thus, to prove that $c_i(t) > 0$ for all $t \ge 0$, we denote by
	\begin{equation*}
    	T_i \coloneqq ||\tau_i||_\infty =  \inf\{t \ge 0 : \pr(\tau_i \leq t) = 1 \}, \quad i = 1, 2,
	\end{equation*}
	the essential least upper bounds of the corresponding stopping times, and show that $T_i = \infty$. 
	
	We argue this by contradiction: assuming that $T_1 < \infty$ (clearly, the case $T_2 < \infty$ can be treated the same way), we obtain
	\begin{equation}\label{negativity_at_stopping}
    	\pr\left(x_{1} + W_{\tau_{1}}^{(1)} < 0\right) \ge 
    	\pr\left(\sup_{0 \leq s \leq T_1} W_{s}^{(1)} < -x_{1} \right) > 0,
	\end{equation}
	since by our assumption $x_{1} < 0$. The arguments in step 1 then imply $c_1(T_1) = 0$, because otherwise, since $\tau_1 \le T_1$ almost surely, we would have $\pr(c_1(\tau_1) > 0) = 1$, which, together with \eqref{negativity_at_stopping}, contradicts \eqref{prob_zero_intersection}. However, the equality $c_1(T_1) = 0$ in turn implies that $\tau_2$ is supported on $[0, T_1)$. By symmetry we obtain $T_1 = T_2$ and that $\tau_1$ must also be supported on $[0, T_1)$. However, this implies $\pr(c_1(\tau_1) > 0) = 1$ since, for almost every $\omega \in \Omega$, we have $\tau_1(\omega) < T_1 = T_2$ and thus $c_1(\tau_1(\omega)) \ge \pr(\tau_2 > t)\big|_{t = \tau_1(\omega)} > 0$. Therefore, the set $\{x_1 + W^{(1)}_{\tau_1} < 0\} \cap \{ c_1(\tau_1) > 0\}$ has positive probability. By \eqref{prob_zero_intersection} of step 1, this leads to the desired contradiction.

	\underline{Step 3:} Combining steps 1 and 2 we obtain 
	$$
	\pr\left(x_i + W_{\tau_{i}}^{(i)} < 0,\, \tau_i < \infty\right) = 0
	$$
	for each $i = 1, 2$. However, this clearly implies that each $\tau_i$ is almost surely greater than or equal to the corresponding hitting time of the origin by the process $x_i + W_{t}^{(i)}, \, t \ge 0$. Using the definitions of the discount functions \eqref{def_discount_funct} and the stopping times 
    $\sigma^{(i)}_\xi = \nobreak \inf \{t \ge\nobreak 0 : W_t^{(i)} = \xi\}, \, \xi \in \mathbb{R}$, we have $\tau_i \ge \sigma^{(i)}_{-x_i}$ almost surely, in particular
	\begin{equation}\label{surv_func_lower_bound}
    	c_{3-i}(t) \geq \pr\left(\tau_{i} > t\right) \ge \pr\left(\sigma^{(i)}_{-x_i} > t\right).
	\end{equation}
	And a lower bound on the tail distribution of $\sigma^{(i)}_{-x_i}$ can be obtained by the reflection principle and Brownian scaling (see, e.g., Chapter 2.6 in \cite{BMSC}):
	\begin{equation}\label{hitting_time_tail_bound}
    	\begin{split}
    	\pr\left(\sigma^{(i)}_{-x_i} > t\right) 
    	&= 
    	2\, \pr\left(W^{(i)}_t \le -x_i\right) - 1\\
    	&=
    	2\, \pr\left(0 \leq W_t^{(i)} < -x_i\right)\\
    	&=
    	2\, \pr \left(0 \leq W_1^{(i)} < \frac{-x_i}{\sqrt{t}} \right)\\
    	&\ge
    	\sqrt{\frac{2}{\pi}} \exp \left(-\frac{x_i^2}{2t} \right) \frac{|x_i|}{\sqrt{t}}\\
    	&\ge
    	\frac{|x_i|}{\sqrt{\pi t}}
    	\end{split}
	\end{equation}
	for $t$ large enough, where the second--to--last inequality follows by the fact that $W^{(i)}_1$ is a standard Gaussian random variable.
	It only remains to show that such a lower bound for each player actually allows the other player to obtain an arbitrarily large expected payoff.
	
	\underline{Step 4:} To make the last statement precise, we apply Lemma \ref{unbounded_payoff_lemma} once again. Note that the bounds \eqref{surv_func_lower_bound} and \eqref{hitting_time_tail_bound} immediately lead to \eqref{condition_on_disc_func}, so the conclusion of the Lemma \ref{unbounded_payoff_lemma} contradicts the assumption that $(\tau_1, \tau_2)$ is a Nash equilibrium with finite expected payoffs for both players. \hfill \qed {\parfillskip0pt\par}
	
\subsection{Proof of \texorpdfstring{\hyperref[result_4]{IV}}{}} \label{subsec_proof_4}
Since Part \ref{result_4} consists of three statements, each with its own distinct proof; we present them sequentially.
	
	
	
	

	\noindent \textbf{Proof of \ref{result_4}a:} In this part we show that if $f(\cdot)$ satisfies \ref{assm_4}, then there are no equilibria with finite expected payoffs, in which $\pr(\tau_1 > 0) > 0$ and $ \pr(\tau_2 > 0) > 0$.
    Let us assume that $(\tau_1, \tau_2)$ is a Nash equilibrium in which both players receive a finite expected payoff, and introduce again
	\begin{equation}\label{stopping_times_sups}
    	T_i \coloneqq ||\tau_i||_\infty =  \inf\{t \ge 0 : \pr(\tau_i \leq t) = 1 \}, \quad i = 1, 2. 
	\end{equation}
	To arrive at a contradiction, we suppose that $\pr(\tau_i > 0) > 0, \, i = 1, 2$, and note that this implies $T_i > 0, \, i = 1, 2$. By Lemma \ref{boundary_stop_times} we know that there exist some boundaries $b_i: [0, \infty) \to [-\infty, \infty], \, i = 1, 2$, such that the stopping times $\tau_i, \, i = 1, 2$, are bounded from below by the first hitting times of these boundaries. 
	
	The key observation now is that, by Lemma \ref{devilish_lemma}, \textit{the processes $x_i + W^{(i)}_\cdot$ will become negative before hitting the boundaries $b_i(\cdot)$, with positive probability}. We will show that, on these events, each player will have to wait at least until his respective process hits the origin. And, as we have already seen in the proof of part \ref{result_3}, this will provide the other player enough time to obtain an arbitrarily large expected payoff. 
	
	We proceed as follows. Without loss of generality, suppose that we have $T_1 \leq T_2$ in \eqref{stopping_times_sups} and, for the sake of readability, we will drop our superscripts for the first player and define 
	\begin{equation*}
    	\tau \coloneqq \tau_1, \quad W \coloneqq W^{(1)}, \quad x \coloneqq x_1, \quad c(\cdot) \coloneqq c_1(\cdot)
	\end{equation*}
	along with
	\begin{equation*}
    	\widetilde{W}_t \coloneqq W_{\sigma_{-\delta} + t} - W_{\sigma_{-\delta}}, \, 0 \le t < \infty
	\end{equation*}
	where $\delta > 0$ and $\sigma_{-\delta}$ is defined as before as $
	\sigma_y \coloneqq \inf\{t \ge 0 : x + W_t \leq y \}$.
	Consider now the event $\{\tau > \sigma_{-\delta}\}$, which has positive probability by Lemma \ref{devilish_lemma}. We claim that on this event one must have 
	\begin{equation}\label{lower_bound_OST}
    	\tau \ge \sigma_{-\delta} +  \widetilde{\sigma}_{\delta}, \quad a.s.,
	\end{equation} 
	where $\widetilde{\sigma}_\delta$ is defined by $
	\widetilde{\sigma}_y \coloneqq \inf\{t \ge 0 : \widetilde{W}_t \leq y \}$. 
    Moreover, we claim that \eqref{lower_bound_OST} already contradicts the finiteness of the payoffs under $(\tau, \tau_2)$. 
    
    Indeed, we note the following chain of inequalities:
	\begin{equation}\label{boundary_ST_tail_bound}
    	\begin{split}
    	\pr(\tau > t) 
    	&\geq
    	\pr\left(\tau > \sigma_{-\delta}, \ \widetilde{\sigma}_\delta > t - \sigma_{-\delta}\right) \\
    	&=
    	\ex\left[\ex\left[\mathbf{1}_{\{\tau > \sigma_{-\delta}, \ \widetilde{\sigma}_\delta > t - \sigma_{-\delta}\}} \left| \mathcal{F}_{\sigma_{-\delta}} \right] \right. \right] \\
    	&=
    	\ex \left[\mathbf{1}_{\{ \tau > \sigma_{-\delta}\}} \pr\left(\widetilde{\sigma}_\delta > t-u\right) \Big|_{u = \sigma_{-\delta}} \right] \\
    	&\geq
    	\ex \left[\mathbf{1}_{\{ \tau > \sigma_{-\delta} \}} \pr(\widetilde{\sigma}_\delta > t) \right]\\
    	&=
    	\pr\left(\tau > \sigma_{-\delta}\right) \pr(\widetilde{\sigma}_\delta > t).
    	\end{split}
	\end{equation}  
	Here, the first inequality is a consequence of \eqref{lower_bound_OST}, the equalities in the second and third lines follow from the tower property and the independence of $\widetilde{W}_\cdot$ and $\mathcal{F}_{\sigma_{-\delta}}$, respectively; and the last inequality from the fact that $\sigma_{-\delta} \ge 0$. Finally, combining the tail bound \eqref{hitting_time_tail_bound}, the definition of the discount functions \eqref{def_discount_funct}, and the fact that $\pr(\tau>\sigma_{-\delta}) > 0$, we obtain
	\begin{equation*}
    	\liminf_{t \rightarrow \infty} \left(c_2(t) \sqrt{t} \right) \ge \liminf_{t \rightarrow \infty} \left(\pr(\tau > t) \sqrt{t}\right) > 0,
	\end{equation*}
	which implies that $(\tau, \tau_2)$ cannot be a Nash equilibrium, by Lemma \ref{unbounded_payoff_lemma}.
	
	Therefore, it only remains to prove the inequality \eqref{lower_bound_OST}. To obtain \eqref{lower_bound_OST}, we argue similarly to the proof of Part \ref{result_3}. 
    In particular, we recall the identity \eqref{prob_zero_intersection}, which was shown in Part \ref{result_3} to hold irrespective of the starting point $x$, and in the present notation can be written as \begin{equation}\label{prob_zero_intersection_part_4}
	\pr\left( \{x + W_{\tau} < 0\} \cap \{ c(\tau) > 0\}\right) = 0.
	\end{equation} 
 
	Firstly, we note that if $T_1 = \infty$, then \eqref{lower_bound_OST} follows immediately from \eqref{prob_zero_intersection_part_4}. Indeed, in this case the event $\{ c(\tau) > 0\}$ has full measure and on the event $$\{\sigma_{-\delta} < \tau < \sigma_{-\delta} + \widetilde{\sigma}_{\delta}\},$$ the process $x + W_\cdot$ is strictly negative by the definition of the respective stopping times.
    Secondly, the case $T_1 < \infty$ is impossible by the following reasoning. First, observe that  \begin{equation}\label{positivity_of_stopping_negative}
    	\begin{split}
    	\pr(x + W_\tau < 0) &\ge \pr\left( \sup_{0 \leq t \leq T_1}  B_t < 1, \, \sigma_{-1} < \tau \leq T_1 \right) \\
    	& = \pr\left(\sup_{0 \leq t \leq T_1}  B_t < 1 \right) \pr\big(\sigma_{-1} < \tau\big) \\
    	&> 0,
    	\end{split}
	\end{equation} where $\sigma_{-1}$ is again defined by $\sigma_{-1} \coloneqq \inf\{t \ge 0 : x + W_t \leq -1 \}$, $B_t \coloneqq W_{t + \sigma_{-1}} - W_{\sigma_{-1}}$ is a standard Brownian motion independent of $\mathcal{F}^{(1)}_{\sigma_{-1}}$, and we apply Lemma \ref{devilish_lemma} to obtain the last inequality.  This implies that $c(T_1) = 0$, since \eqref{prob_zero_intersection_part_4} is violated otherwise by arguments similar to those immediately after the inequality \eqref{negativity_at_stopping}. Therefore, $T_2 \leq T_1$, and, by symmetry, we obtain $T_1 = T_2 \coloneqq T$, along with the fact that $c(T) = c_2(T) = 0$.  However, by definition of $c(\cdot)$ in \eqref{def_discount_funct}, we then get $\tau \in [0, T)$ almost surely, and appealing once more to \eqref{prob_zero_intersection_part_4}, we obtain
	\begin{equation*}
    	\begin{split}
        	\pr(x + W_\tau < 0) &= \pr\left(x+W_\tau <0 , \, c(\tau) = 0\right) \\
        	&\leq \pr(\tau \ge T) \\
        	&= 0,
    	\end{split}
	\end{equation*} 
	which contradicts \eqref{positivity_of_stopping_negative}.
	
	\noindent \textbf{Proof of \ref{result_4}b:} 
    To show that a pair $(\tau_1, \tau_2)$ with $\, \pr(\tau_1 > 0) = 0$ and $\, \pr(\tau_2 > 0) = 0$ is a Nash equilibrium with finite expected payoffs if, and only if, $\min(x_1, x_2) \ge 0$, we argue as follows. Consider a pair of stopping times $(\tau_1, \tau_2) = (0, 0)$ and note that, by deploying such stopping times, the two players receive payoffs $x_1/2$ and $x_2/2$, respectively. However, if either of the players decides to deviate he will receive zero payoff due to Blumenthal's zero–one law. Therefore, if a player's initial position is negative, the player will always deviate; whereas if a player's initial position is non-negative, the player won't have the incentive to change his strategy, which proves the statement of \ref{result_4b}.
	
	\noindent \textbf{Proof of \ref{result_4}c:} 
    Finally, it remains to show that there exist Nash equilibria with finite expected payoffs in which $\, \pr(\tau_i > 0) > 0$ and $\, \pr(\tau_{3-i} > 0) = 0, \, i = 1, 2$, if, and only if, $\min(x_1, x_2) \le 0 < \max(x_1, x_2)$. 
    For the forward implication, we assume that $(\tau_1, \tau_2)$ is a Nash equilibrium in which, without loss of generality, $\tau_1 \equiv 0$ and $\pr(\tau_2 > 0) > 0$. Arguing as in the proof of part  \ref{result_4b} we immediately obtain that the initial position of the first player must be non-negative, i.e., $x_1 \ge 0$, while the initial position of the second player must be non-positive, i.e., $x_2 \le 0$. To complete the proof of the forward implication, we just note that $x_1 = 0$ provides a contradiction, since the first player can deviate and obtain a strictly greater expected payoff by playing $\widetilde{\tau}_1 = \tau_\varepsilon \coloneqq \inf \{t \ge 0: W^{(1)}_t = \varepsilon\}$ for some $\varepsilon > 0$. Indeed, by choosing such a stopping time the first player's expected payoff is strictly positive since  
	\begin{equation*}
    	\begin{split}
        	J_1(\widetilde{\tau}_1, \tau_2, x_1, x_2) 
        	&=
        	\ex \left[ f\left(W^{(1)}_{\tau_\eps}  \right) c_1(\tau_\eps)\right]
        	\\ &=
        	\ex \big[ f(\eps) \, c_1(\tau_\eps)\big]
        	\\ &\ge
        	f(\eps)\, \pr(\tau_2 > \tau_\eps)
        	\\ &
        	> 0,
    	\end{split}
	\end{equation*}
	where the last inequality follows from the fact that $f(\eps) > 0$ from the linearity of $f(\cdot)$, while $\pr(\tau_2 >\nobreak \tau_\eps) > 0$ by independence of $\tau_2$ and $\tau_\eps$ and the fact that $\pr(\tau_2 > 0) > 0$. Therefore, $(\tau_1, \tau_2)$ with $\tau_1 \equiv 0$, $\pr(\tau_2 >\nobreak 0) > 0$ is not a Nash equilibrium when $x_1 = 0 \ge x_2$.
	
	For the reverse implication, we suppose, again without loss of generality, that $x_1 > 0 \ge x_2$. We claim that any stopping time $\tau_2$ with a survival function that decays fast enough then provides a Nash equilibrium $(\tau_1, \tau_2)$ with $\tau_1 \equiv 0$. It is clear that the second player does not have an incentive to deviate, since the immediate stopping of the first player will not allow him to obtain any positive payoff. As for the first player, intuitively speaking, he will not deviate because the discount function in his respective optimal stopping problem decreases so rapidly that he will not be able to gain anything by continuing the game instead of stopping immediately. 
	
	To make this intuition precise, we proceed as follows. We use a result of Shepp \cite{Shepp}, who considers the optimal stopping problem 
    of maximizing the expected payoff
    \begin{equation*}
        \ex \left[ \frac{u + W_\tau}{b + \tau} \right], \quad u \in \mathbb{R}, \, b > 0,
    \end{equation*}
    i.e., a problem similar to \eqref{value_func_as_discounting} but with reward function $u + x, \, u \in \mathbb{R}$ and discount function $1/(b + t), \, b > 0$. Shepp proves that in this problem
    it is optimal to stop immediately if $u \ge \alpha \, \sqrt{b}$, where $\alpha$ is the unique real root of 
	\begin{equation*}
	   \alpha=\left(1-\alpha^{2}\right) \int_{0}^{\infty} e^{\lambda \alpha-\lambda^{2} / 2} d \lambda.
	\end{equation*}
	To make use of this result in our setting, consider any stopping time $\tau_2$ with survival function $h(\cdot) = \pr(\tau_2 > \cdot)$ such that
	\begin{equation}\label{upper_bound_survival_function}
	   \pr(\tau_2 > t) = h(t) \le \frac{b}{b + t}, \quad t \ge 0,
	\end{equation}
	where $b > 0$ satisfies $\alpha \sqrt{b} \le x_1$. The existence of such stopping times follows, for example, from Theorem 1 in Anulova \cite{Anulova}, which states that for any distribution $\mu$ on $(0, \infty]$, there exists a boundary such that the first time a reflected Brownian motion hits this boundary has distribution $\mu$. Then, if the second player chooses such a stopping time $\tau_2$, the first player's optimal response is $\tau_1 \equiv 0$, since for any other stopping time $\tau$ his expected payoff can be bounded above by
	\begin{equation*}
    	\begin{split}
    	J_1(\tau, \tau_2, x_1, x_2) 
    	&=
    	\ex \left[f\left(x_1 + W^{(1)}_{\tau} \right)  h(\tau) \right]
    	\\ &\le
    	bk \, \ex \left[ \frac{x_1 + W^{(1)}_\tau}{b + \tau}\right]
    	\\ &\le
    	bk \frac{x_1}{b}
    	\\ &=
    	k \, x_1 = J_1(\tau_1, \tau_2, x_1, x_2),
    	\end{split}
	\end{equation*}
	where the first inequality follows from the choice of $h(\cdot)$ and the second inequality follows from the choice of $b$ and the results of \cite{Shepp} just mentioned. Therefore, the first player will not deviate, which establishes that $(\tau_1, \tau_2)$ is a Nash equilibrium. Since there are infinitely many stopping times that satisfy the condition \eqref{upper_bound_survival_function}, we can obtain infinitely many Nash equilibria of such type. This completes the proof of part \ref{result_4c} and the proof of Theorem \ref{main_thm}. \hfill \qed {\parfillskip0pt\par}

\section{Ramifications and Extensions}\label{sec_extensions}

In this section, we illustrate some immediate ramifications and a corollary to Theorem \ref{main_thm}, which classifies all Nash equilibria in the problem of maximizing \eqref{value_func_as_discounting} in the special case where $f(\cdot)$ is affine. Subsequently, we discuss three extensions of our results.

\subsection{Ramifications of Theorem \protect\ref{main_thm}}

We first show that some simple relaxations to the conditions \ref{assm_1} and \ref{assm_4} may be made, given in Remark \ref{relaxation_remark} below. In the special case of an affine reward function $f(\cdot)$, these relaxations, together with Theorem \ref{main_thm}, enable us to solve the Dynkin game under consideration completely in the sense that we are able to classify all possible Nash equilibria. This result is given in Corollary \ref{completely_solved_game} below.

\begin{remark}\label{relaxation_remark}
	Note that assumptions \ref{assm_1} and \ref{assm_4} may be replaced by the following: 
	\begin{enumerate}[leftmargin=45pt, label = \textbf{(A\arabic*)$^{\prime}$}]
		\item There exists some $m \in \mathbb{R}$ such that $f(x) < 0$ for $x < m$ and $f(x) \ge 0$ for $x \ge m$, \label{assm_1_prime}
		\addtocounter{enumi}{2}
		\item $f(x) = kx + b$ with $k > 0$, $b \in \mathbb{R}$, \label{assm_4_prime}
	\end{enumerate}
	with the appropriate changes $\max(x_1, x_2) < m$ in \ref{result_3}, $\min(x_1, x_2) \ge -b/k$ in \ref{result_4b} and $\max(x_1, x_2) > -b/k$, $\min(x_1, x_2) \leq -b/k$ in \ref{result_4c} of Theorem \ref{main_thm}.
	
	Indeed, for a function $f(\cdot)$ satisfying \ref{assm_1_prime}, we note that $g(x) \coloneqq f(x + m)$ satisfies \ref{assm_1} and further satisfies \ref{assm_2} and \ref{assm_3} whenever $f(\cdot)$ does.  Noting that in terms of $g(\cdot)$ the players' potential rewards are given by $g(x_i - m + W_{\tau_i}^{(i)})$, $i \in \{1, 2\}$, the optimal stopping problem in our setting involving $f(\cdot)$ and starting points $(x_1, x_2)$ falls under the purview of Theorem \ref{main_thm}, as it is equivalent to the problem involving reward function $g(\cdot)$ and starting points $(x_1 - m, x_2 - m)$. Similarly, if $f(\cdot)$ satisfies \ref{assm_4_prime}, the optimal stopping problem with reward function $f(\cdot)$ and starting points $(x_1, x_2)$ is equivalent to the problem with reward function $g(x) \coloneqq kx$ and starting points $(x_1 + b/ k, x_2 + b/k)$, to which part \ref{result_4} of Theorem \ref{main_thm} may be applied.
\end{remark}

\begin{corollary}\label{completely_solved_game}
	Consider the two-player game described in Section \ref{model}, started at $(x_1, x_2) \in \mathbb{R}^2$. Assume that $f(\cdot)$ is affine and strictly increasing, i.e., $f(x) = kx + b$, where $k > 0, b \in \mathbb{R}$. Then there exist Nash equilibria $(\tau_1, \tau_2)$ for such a game, which can be characterized as follows:
	\begin{enumerate}
		\item Either both players receive infinite expected payoffs by playing $\tau_1$ and $\tau_2$, respectively, and there exist infinitely many such equilibria;
		
		\item Or both players receive finite payoffs. This is possible only in the following cases:
		\begin{enumerate}
			\item $\min(x_1, x_2) \ge -b/k$, and the equilibrium is given by $(\tau_1, \tau_2) = (0, 0)$ with respective payoffs $(x_1/2, x_2/2)$, 
			\item $\min(x_1, x_2) \le -b/k, \max(x_1, x_2) > -b/k$, and the game terminates immediately, since the player with $x_i > 0$ chooses the stopping time $\tau_i \equiv 0$. There are infinitely many Nash equilibria in this case, but the players' expected payoffs are always given by $(\max(x_1, x_2), 0)$ or  $(0, \max(x_1, x_2))$ depending on whether $x_1 > x_2$ or $x_1 < x_2$ respectively.
		\end{enumerate} 
	\end{enumerate}
\end{corollary}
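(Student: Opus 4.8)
The plan is to obtain the corollary with essentially no new probabilistic work, by combining Theorem \ref{main_thm} with the coordinate shift of Remark \ref{relaxation_remark}. Concretely: since $f(x) = kx + b$ with $k > 0$ satisfies \ref{assm_4_prime}, Remark \ref{relaxation_remark} tells us that the game with reward $f$ started at $(x_1, x_2)$ is equivalent --- in the sense of having the same Nash equilibria and the same equilibrium payoffs --- to the game with linear reward $g(x) \coloneqq kx$ started at the shifted points $(y_1, y_2) \coloneqq (x_1 + b/k,\, x_2 + b/k)$. One checks that $g$ satisfies simultaneously \ref{assm_1}, \ref{assm_2} with $\gamma = 1$, \ref{assm_3} (with zero constant term), and \ref{assm_4}, so that \emph{all} of parts \ref{result_1}--\ref{result_4} of Theorem \ref{main_thm} become available for $g$ at $(y_1, y_2)$; everything then reduces to a translation of hypotheses and a check that the list of equilibrium types is exhaustive.

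Assembling the classification, I would argue as follows. Part \ref{result_1} yields infinitely many Nash equilibria in which both players have infinite expected payoff; this simultaneously shows that equilibria exist and gives the first alternative of the corollary. Part \ref{result_2} --- applicable because $g$ obeys \ref{assm_1}, \ref{assm_2} with $\gamma \ge 1$, and \ref{assm_3} --- shows there is no equilibrium in which one player's payoff is finite and the other's infinite, so every equilibrium is either ``both infinite'' or ``both finite''. In the latter case part \ref{result_4} takes over: \ref{result_4a} rules out equilibria in which both stopping times are non-trivial, so at least one player stops immediately; \ref{result_4b} says the pair $(0,0)$ is an equilibrium precisely when $\min(y_1, y_2) \ge 0$, i.e.\ $\min(x_1, x_2) \ge -b/k$, which is case 2(a); and \ref{result_4c} says equilibria with exactly one trivial and one non-trivial stopping time exist, and then there are infinitely many of them, precisely when $\min(y_1, y_2) \le 0 < \max(y_1, y_2)$, i.e.\ $\min(x_1, x_2) \le -b/k < \max(x_1, x_2)$, which is case 2(b). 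The equilibrium payoffs in each case are read off by substituting the equilibrium strategies into the definition \eqref{value_functions_i} of $J_i$ and using the identity $g(y_i) = k y_i = k x_i + b = f(x_i)$ to return to the original coordinates; in the residual sub-case $\max(x_1, x_2) < -b/k$ one may instead invoke part \ref{result_3} directly for the absence of finite-payoff equilibria.

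The only place requiring genuine care --- and hence the main obstacle --- is exhaustiveness: I must rule out a ``both finite'' equilibrium lying outside cases 2(a) and 2(b). This is handled by the trichotomy both-trivial / exactly-one-trivial / both-non-trivial: part \ref{result_4a} eliminates the last possibility, while \ref{result_4b} and \ref{result_4c} pin down exactly the parameter ranges in which the first two occur, so that any configuration with $\min(x_1, x_2) < -b/k$ and $\max(x_1, x_2) \le -b/k$ admits no finite-payoff equilibrium whatsoever, leaving only the infinite-payoff equilibria of part \ref{result_1}. (The boundary configuration $\min(x_1, x_2) = -b/k < \max(x_1, x_2)$ lies in both case 2(a) and case 2(b), consistent with the corollary asserting only that finite-payoff equilibria are ``possible'' in these cases.) A secondary, purely bookkeeping, point is to carry the shift $y_i = x_i + b/k$ through consistently --- translating each condition on $(y_1, y_2)$ back to a condition on $(x_1, x_2)$ and rewriting each payoff $g(y_i)$ in terms of $f$ --- which is routine once the equivalence of Remark \ref{relaxation_remark} is in hand.
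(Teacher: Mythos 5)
Your proposal is correct and follows exactly the route the paper intends: the corollary is stated there without a separate proof precisely because it is the combination of Theorem \ref{main_thm} (all four parts, applied to $g(x) = kx$) with the coordinate shift of Remark \ref{relaxation_remark}, which is what you carry out, including the exhaustiveness check via the trivial/non-trivial trichotomy. No gaps.
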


We now discuss several extensions of our results.

\subsection{Different Reward Functions}

First, we note that the reward function $f(\cdot)$ of the Dynkin game under consideration need not be the same for both players. More precisely, if the expected payoffs \eqref{value_functions_i} of the players depend on different reward functions $f_i: \mathbb{R} \to \mathbb{R}, \, i = 1, 2$, i.e., if the expected payoffs are given by 
\begin{equation*}
    J_i(\tau_i, \tau_{3-i}, x_1, x_{2}) \coloneqq \ex \left[f_i\left(x_i + W^{(i)}_{\tau_i}\right) \left( \mathbf{1}_{\{\tau_i < \tau_{3-i}\}} + \frac{1}{2}\mathbf{1}_{\{\tau_i = \tau_{3-i} < \infty\}}\right)\right],
\end{equation*}
then all statements of Theorem \ref{main_thm} still hold provided that both the reward functions $f_i(\cdot), \, i = 1, 2$ satisfy the assumptions \ref{assm_1}--\ref{assm_4} of Section \ref{model}. This follows from the fact that all arguments in the proof of Theorem \ref{main_thm} only rely on the properties of the reward function and do not use the fact that the reward function is the same for both players.

\subsection{$n$--Player Games}

Another extension is to note that the statement \ref{result_1} of Theorem \ref{main_thm} holds if we consider an $n$-player game instead of a 2-player game, as discussed previously. Indeed, with some necessary yet obvious adjustments of notation, it is clear that in an $n$-player game, each player faces an $n$-player analogue of the optimal stopping problem \eqref{value_func_as_discounting}. More precisely, denoting by $\vec{x} \coloneqq(x_1, \dots, x_n)$ the vector of initial positions of all players and by $\vec{\tau} \coloneqq (\tau_1, \dots, \tau_n)$ the vector of stopping times chosen by the players, and arguing similarly to the beginning of the proof of Theorem \ref{main_thm}, one can show that each player $i$ needs to maximize the expected payoff given by the expression 
\begin{equation}\label{value_func_as_discounting_n_players}
    J_i(\vec{\tau}, \vec{x}) = \ex \left[ f\left(x_i + W_{\tau_{i}}^{(i)} \right) c_i(\tau_i)\right], \quad i = 1, \dots, n, 
\end{equation}
with 
\begin{equation}\label{def_discount_funct_n_players}
    c_i(t) \coloneqq \prod_{\substack{j=1 \\ j \neq i}}^{n} \pr(\tau_{j} > t) + d_i(t), \quad 0 \le t < \infty,
\end{equation}
where $d_i(t) \ge 0$ is a term responsible for the winning in case of ties between players (we will not need its explicit form, it is enough to assume it is non-negative). 

However, using \eqref{value_func_as_discounting_n_players} and proceeding exactly as in the proof of Part \ref{result_1} of Theorem \ref{main_thm}, one can show that the vectors of stopping times $(\tau_a^{(1)}, \dots, \tau_a^{(n)})$, with $\tau_a^{(i)}, \, i = 1, \dots, n$ defined as in \eqref{square_root_st_times}, will again constitute Nash equilibria with infinite expected payoffs, provided that $a$ is large enough so that the quantity $\beta(a)$ defined in \eqref{breiman_conditions} satisfies 
\begin{equation}\label{upper_bound_on_beta}
    \beta(a) < \frac{\gamma}{2n}.
\end{equation} 
Indeed, in this case, the analogue of the expression \eqref{payoff_under_root_coundary} for the $n-$player game has a power parameter 
\begin{equation*}
    q \coloneqq \frac{\gamma}{2} - (n-1)\beta(a),
\end{equation*}
and thus the lower bound \eqref{infinite_lower_bound} is still valid, as the analogous $r(a)$ satisfies
\begin{equation*}
    r(a) \coloneqq \frac{2\beta(a)}{\gamma - 2(n-1)\beta(a)} < 1,
\end{equation*}
as consequence of \eqref{upper_bound_on_beta}.

\subsection{Mixed Strategies}

Finally, we show that all statements of Theorem \ref{main_thm} continue to hold if players are allowed to use mixed strategies. In other words, strategy randomization does not create categorically different equilibria in the Dynkin game under consideration. 
For brevity, and similarly to the previous subsections, we only provide the main ideas, leaving rigorous justifications to the reader. 

We use the notion of mixed strategies from Touzi and Vieille \cite{TouVie}. Namely, we extend the setting of Section \ref{model} by enlarging the probability space from $(\Omega, \mathcal{F}, \pr)$ to 
$\big([0, 1] \times [0, 1] \times \Omega, \mathcal{B}([0, 1]) \otimes \mathcal{B}([0, 1]) \otimes \mathcal{F}, \lambda_1 \otimes \lambda_2 \otimes \pr\big) \eqqcolon (\overline{\Omega}, \overline{\mathcal{F}}, \overline{\pr})$, 
where $\mathcal{B}([0, 1])$ is the Borel $\sigma$--algebra on $[0, 1]$ and $\lambda_i, \, i = 1, 2$, is the Lebesgue measure.
A mixed strategy for player $i, \, i = 1, 2,$ is then defined as a $\lambda_i \times \pr$--measurable function $\phi_i: [0, 1] \times \Omega \to [0, T], \, i = 1, 2$, such that
\begin{equation}
    \text{for } \ \lambda_i-\text{a.e.} \ \ u \in [0, 1], \quad \omega \mapsto \phi_i(u, \omega) \text{ is an } \mathbb{F}^{(i)}-\text{stopping time}.
\end{equation}
We denote the corresponding sets of mixed strategies by $\Phi_i, \, i = 1,2$, and denote $\tau_i^u \coloneqq \phi_i(u, \cdot)$.

When the players choose their strategies $\phi_1$ and $\phi_2$, respectively, they receive payoffs 
\begin{equation}
    R_i(u, v, x_1, x_2) \coloneqq f\left(x_i + W^{(i)}_{\tau_i^u}\right) \left( \mathbf{1}_{\{\tau_i^u < \tau_{3-i}^v\}} + \frac{1}{2}\mathbf{1}_{\{\tau_i^u = \tau_{3-i}^v < \infty\}}\right), \quad i = 1, 2,
\end{equation}
and maximize their expected payoffs
\begin{equation}
    \begin{split}
        \overline{J}_i(\phi_i, \phi_{3-i}, x_1, x_2) 
        &\coloneqq 
        \overline{\ex}\big[R_i \big],
        \\
        \overline{V}_i(\phi_{3-i}, x_1, x_2) 
        &\coloneqq 
        \sup\limits_{\phi \in \Phi_i} \overline{J}_i(\phi, \phi_{3-i}, x_1, x_2).
    \end{split}
\end{equation}
Here, $\overline{\ex}[ \, \cdot \, ]$ denotes expectation with respect to the new measure $\overline{\pr}$.
\\

We are ready to show that all statements of Theorem \ref{main_thm}, appropriately adjusted for the mixed strategies $\phi_i, \, i = 1, 2$, continue to hold in the mixed strategy setting. Moreover, all arguments hereafter repeat the proofs from Section \ref{sec_proof_of_main_thm} with minor modifications.

The statement \ref{result_1} holds trivially since the set of pure strategies is a subset of the set of mixed strategies; thus, the proof of the statement \ref{result_1} from Section \ref{subsec_proof_1} remains valid.

To obtain the statement \ref{result_2}, we first rewrite the players' expected payoffs as
\begin{equation}\label{integral_representation_of_payoffs}
    \begin{split}
        \overline{J}_i(\phi_i, \phi_{3-i}, x_1, x_2) 
        &= 
        \overline{\ex}\left[R_i \right]
        \\&=
        \int_0^1 \int_0^1 \int_\Omega R_i(u, v, x_1, x_2) \, d\pr \, du \, dv
        \\&=
        \int_0^1 \int_0^1 \ex\left[
            f\left(x_i + W^{(i)}_{\tau_i^u}\right)
            c_i\big(v, \tau_i^u\big) 
        \right] du \, dv
        \\&=
        \int_0^1 \ex\left[
            f\left(x_i + W^{(i)}_{\tau_i^u}\right) c_i\big(\tau_i^u \big)
        \right] du.
    \end{split}
\end{equation}
Here, the second equality follows from the tower property as in \eqref{tower_property}, and similarly to \eqref{def_discount_funct}, we denote, for $i = 1, 2$,
\begin{equation} \label{c_dfn_mixed_strat}
    \begin{split}
        c_i(v, t) &\coloneqq \pr\left(\tau_{3-i}^v > t\right) + \frac{1}{2} \pr\left(t = \tau_{3-i}^v < \infty\right), \quad 0 \le t < \infty,
        \\
        c_i(t) &\coloneqq \int_0^1 c_i(v, t) \, dv, \quad 0 \le t < \infty.
    \end{split}
\end{equation}

We argue now by contradiction, similarly to the proof of the statement \ref{result_2} from Section \ref{subsec_proof_2}. We assume, without loss of generality, that the first player receives an infinite expected payoff, meaning that the expressions in \eqref{integral_representation_of_payoffs} with $i = 1$ are infinite. Then, there exists a set $A \in \mathcal{B}([0, 1])$ with positive Lebesgue measure such that
\begin{equation*}
    \ex\left[
        f\left(x_1 + W^{(1)}_{\tau_1^u}\right) c_1\big(\tau_1^u \big) 
    \right] \ge k\Big(1 + 2|b|\Big)
\end{equation*}
for every $u \in A$, where the constants $k$ and $b$ are given in the assumption \ref{assm_3}. 
However, using \ref{assm_3} (equivalently, taking the difference of the first inequalities in \eqref{upperboundminus} and \eqref{lowerboundplus}), we obtain
\begin{equation*}
    \ex\left[x_1+W^{(1)}_{\tau_1^u}\right] \ge 1, \quad \forall \, u \in A.
\end{equation*}
Using the result of Novikov \cite{Novikov} once more, we obtain
\begin{equation*}
    \liminf_{t \rightarrow \infty} \left(\pr(\tau_1^u > t) \sqrt{t}\right) \ge 1, \quad \forall \, u \in A,
\end{equation*}
which implies that
\begin{equation*}
    \liminf_{t \rightarrow \infty} \left(c_2(t)\sqrt{t \log \log (t)}\right) \ge \liminf_{t \rightarrow \infty} \int_A \pr(\tau_1^u > t) \sqrt{t \log \log (t)} \, du = \infty. 
\end{equation*}
An appeal to Lemma \ref{unbounded_payoff_lemma} then yields that player 2 may obtain an arbitrarily large expected payoff with deterministic strategies, which is the desired contradiction.

To obtain the statement \ref{result_3}, we proceed again by contradiction, assuming that $(\phi_1, \phi_2)$ is an equilibrium where both players obtain finite expected payoffs. We first note that the argument of \eqref{comparing_payoffs}, which implies \eqref{prob_zero_intersection}, applies for Lebesgue almost every $(u,v) \in [0,1]^2$; specifically, we have
\begin{equation*}
    \pr\Big(\{x_i + W^{(i)}_{\tau_i^u} < 0 \} \cap \{c_i(v,\tau_i^u) >0\}\Big) = 0,  \ \ \text{for} \ \ \lambda_1 \otimes  \lambda_2-\text{a.e.} \ \ (u, v) \in [0, 1]^2.
\end{equation*}
Therefore, the remaining steps of our proof of the statement \ref{result_3} from Section \ref{subsec_proof_3} hold for all such tuples $(u,v)$. 
In particular, with 
$\sigma^{(i)}_\xi = \nobreak \inf \{t \ge\nobreak 0 : W_t^{(i)} = \xi\}, \, \xi \in \mathbb{R}$,
similarly to \eqref{surv_func_lower_bound}, we have
\begin{equation*}
    c_{3-i}(u,t) \ge\pr\left(\tau_i^u > t\right) \ge \pr\left(\sigma_{-x_i}^{(i)} > t\right), \quad i = 1, 2.
\end{equation*}
Thus, another appeal to Lemma \ref{unbounded_payoff_lemma} allows player $3 - i, \, i = 1, 2$, to obtain an arbitrarily large expected reward with deterministic strategies, which is the desired contradiction. 

To obtain \ref{result_4a}, we proceed by contradiction once more, assuming that $(\phi_1, \phi_2)$ is an equilibrium where both players obtain finite expected payoffs. We start by observing that, for any fixed mixed strategy $\phi_{3-i}$ for player $3-i, \, i = 1, 2$, we have
\begin{equation*}
    \sup_{\phi \in \Phi_i} \overline\ex[R_i] = \sup_{\tau \in \mathcal{T}_i} \ex\left[f\left(x_i + W^{(i)}_\tau\right) c_i(\tau) \right],
\end{equation*}
where $c_i(\cdot)$ is defined in \eqref{c_dfn_mixed_strat}. The ``$\leq$'' direction is implied immediately by the last equality in \eqref{integral_representation_of_payoffs}. The reverse inequality follows trivially from the fact that deterministic strategies are a subset of mixed strategies. Therefore, using the same arguments as in the proof of the statement \ref{result_4a} in Section \ref{subsec_proof_4}, we obtain that $\tau_i^u$ is bounded from below by the first hitting time of a boundary for Lebesgue almost every $u \in [0,1]$. This once again provides player $3-i$ the opportunity to obtain an arbitrarily large expected payoff with deterministic strategies by Lemma \ref{unbounded_payoff_lemma}, which is the desired contradiction.

The statement \ref{result_4b} follows from the identical argument to the corresponding proof in Theorem \ref{main_thm}, replacing $\pr$ by $\overline \pr$.

Finally, the statement \ref{result_4c} also follows immediately from the corresponding proof in Theorem \ref{main_thm} since the set of deterministic strategies is a subset of the set of mixed strategies.

\section{Technical results}\label{sec_technical_proofs} 

In this section, we provide proofs of the technical Lemmas \ref{unbounded_payoff_lemma}, \ref{value_func_in_unbounded_lem}, and \ref{devilish_lemma} that were deployed in the proof of Theorem \ref{main_thm}; and also, for the sake of completeness, state the result by Breiman \cite{Breiman}, which was used in the proof of Part \ref{result_1} of Theorem \ref{main_thm}. We do this in the order in which the corresponding results appear in the paper.

\begin{theorem}[Breiman (1966)] \label{breiman_result}
    Suppose $W = \{W_t: t \ge 0\}$ is a standard Brownian motion. Define
    \begin{equation*}
        \rho_a \coloneqq \inf\left\{t \ge 1 : |W_t| \ge a \sqrt{t} \right\}.
    \end{equation*}
    Then, there exists a function $\beta: (0,\infty) \rightarrow (0,\infty)$ and a constant $c > 0$ such that
    \begin{equation}\label{breiman_limit}
        \lim_{t \rightarrow \infty} \frac{\pr\big(\rho_a > t \, | \, W_1 = 0\big)}{t^{-\beta(a)}} = c \quad \text{ and } \quad \lim_{a \rightarrow \infty} \beta(a) = 0.
    \end{equation}
\end{theorem}

This result is taken from Theorem 1 of Breiman \cite{Breiman}, and we refer the reader there for its proof. Note that $\rho_a$ in Theorem \ref{breiman_result} above satisfies the inequality
\begin{equation*}
    \rho_a \le 1 + \inf\{s \ge 0 : W_{1+s} - W_1 \ge a\sqrt{s+1} - W_1\}.
\end{equation*}
Therefore, the statement \eqref{breiman_limit} implies that
\begin{equation*}
    \liminf_{t \rightarrow \infty} \frac{\pr\big(\widetilde{\rho}_a > t-1\big)}{t^{\beta(a)}} \ge c,
\end{equation*}
where 
\begin{equation*}
    \widetilde{\rho}_a \coloneqq \inf\left\{t \ge 0 : W_t \ge a\sqrt{t+1}\right\}.
\end{equation*}
Since $\widetilde{\rho}_a$ corresponds precisely to our definition \eqref{stopping_time_tilde} of $\widetilde{\tau}_a^{(i)}, \, i= 1, 2$, in the proof of Part \ref{result_1} of Theorem \ref{main_thm}, this justifies \eqref{breiman_conditions}.

We are now ready to state and prove our three technical Lemmas.

\begin{lemma}\label{unbounded_payoff_lemma}
    Fix $x \in \mathbb{R}$.  Consider a standard Brownian motion $W = \{W_t: t \geq 0\}$, let $\, \mathbb{F}^W$ denote the natural filtration of $\, W$, and let $\mathcal{T}$ denote the set of all  $\, \mathbb{F}^W$ -- stopping times for which $\ex \left[ f \left(x + W_{\tau}\right) \right]$ is well defined. 
    
    Consider the optimal stopping problem for the Brownian motion $W$, with reward function $f(\cdot)$ and discount function $c(\cdot)$, namely
    \begin{equation}\label{value_func_in_unbounded_lem}
        V(x) \coloneqq \sup\limits_{\tau \in \mathcal{T}} \ex \Big[f(x + W_\tau) \, c(\tau)\,\mathbf{1}_{\{\tau < \infty\}}\Big].
    \end{equation}
    If the function $f(\cdot)$ satisfies assumption \ref{assm_2} of Theorem \ref{main_thm} with $\gamma \ge 1$, and if
    \begin{equation}\label{condition_on_disc_func}
        \liminf_{t \rightarrow \infty} \left(c(t) \sqrt{t \log \log (t)}\right) = \infty,
    \end{equation}
    then $V(x) = \infty, \ \forall \ x \in \mathbb{R}.$
\end{lemma}

\begin{proof}
    By \ref{assm_2} with $\gamma \ge 1$, there exist constants $\beta > 0$ and $K \in \mathbb{R}$ such that, for every $y \in [K, \infty)$, we have $f(y) \ge \beta y$.
    Fix $M > 0$.  Condition \eqref{condition_on_disc_func} implies that there exists some $T \ge 0$ such that for all $t \ge T$ we have \begin{equation}\label{unbounded_payoff_lemma_bounds}
        c(t) \sqrt{t \log \log (t)} \ge \frac{M}{\beta} \qquad \text{and} \qquad \sqrt{t \log \log(t)} \ge K.
    \end{equation}  
    Consider the stopping time
    \begin{equation*}
        \rho_T \coloneqq \inf \left\{t \ge T : x + W_t \ge \sqrt{t \log \log (t)} \right\},
    \end{equation*}
    which is almost surely finite by the Law of the Iterated Logarithm. Using the bounds \eqref{unbounded_payoff_lemma_bounds} we obtain
    \begin{align*}
        V(x) =
        \sup\limits_{\tau \in \mathcal{T}} \ex \left[f(x + W_\tau) \, c(\tau) \, \mathbf{1}_{\{ \tau < \infty\}} \right] 
        &\ge
        \ex \big[f(x + W_{\rho_T}) \cdot c(\rho_T) \big] \\
        &=
        \ex \left[f\left(\sqrt{\rho_T \log \log \rho_T} \right)  \cdot c(\rho_T) \right] \\
        &\ge
        \ex \left[\beta \sqrt{\rho_T \log \log \rho_T} \cdot c(\rho_T) \right] \\ 
        &\ge
        M,
    \end{align*} 
    and since $M$ was arbitrary, this completes the proof.
\end{proof}

\begin{lemma}\label{boundary_stop_times}
	Consider the optimal stopping problem of \eqref{value_func_in_unbounded_lem} with reward function $f(\cdot)$, discount function $c(\cdot)$ and starting position $x \in \mathbb{R}$. Assume that $f(\cdot)$ satisfies assumption \ref{assm_4} and that the value of the optimal stopping problem is finite. If  $\tau^*$ is an optimal stopping time for this problem, then 
    \begin{equation}\label{lower_bound_by_boundary}
        \tau^* \ge \inf \left\{t \ge 0 : x + W_t \ge b(t) \right\},
    \end{equation}
	where $b: [0, \infty) \to [-\infty, \infty]$ is some boundary.
\end{lemma}

\begin{proof}
	The general theory of optimal stopping (see, e.g., Chapter I.2.2 in \cite{PesShi}, in particular, the proof of Theorem 2.4) implies that any optimal stopping time $\tau^*$ of the problem \eqref{value_func_in_unbounded_lem} satisfies
     \begin{equation}\label{lower_bound_hittin_stopping_region}
         \tau^* \ge \inf\{t: (t, x + W_t) \in \mathcal{S}\} \quad a.s.,
     \end{equation}
     where 
     \begin{equation*}
         \mathcal{S} \coloneqq \{(t,x) \in \mathbb{R}_+ \times \mathbb{R} : c(t)f(x) = V(t,x)\}
     \end{equation*}
     is the so-called \textit{stopping region}, 
    and $V(\cdot,\cdot)$ is the value function for the problem where one starts discounting at time $t$; that is,
    \begin{equation*}
        V(t,x) \coloneqq \sup_{\tau \in \mathcal{T}} \ex \left[ f(x + W_\tau) \, c(t+\tau) \,\mathbf{1}_{\{\tau < \infty\}} \right].
    \end{equation*}
    It suffices to show that, if at some point $(t,x)$ it is better to stop, i.e., that
	\begin{equation}\label{stopping_condition}
	   k \, x \,c(t) \ge \ex \left[k \, (x + W_{t+\tau}) \, c(t + \tau) \, \Big| \, W_t = 0 \right]
	\end{equation} 
	holds for every stopping time $\tau \in \mathcal{T}$, then for any $y > x$ the point $(t,y)$ will also satisfy \eqref{stopping_condition}. Indeed, if this is true, then the stopping region $\mathcal{S}$ of the problem can be represented as
    \begin{equation}\label{stopping_region}
        \mathcal{S} = \{(x, t): x \ge b(t)\},
    \end{equation}
    where $b(\cdot)$ is simply the function defined by
	\begin{equation*}
    	b(t) \coloneqq \inf \{x : (x, t) \text{ satisfies \eqref{stopping_condition}} \}.
	\end{equation*}  
    As a result, \eqref{lower_bound_by_boundary} will follow immediately from \eqref{stopping_region} and \eqref{lower_bound_hittin_stopping_region}.
	
    Now, note that the right-hand side of \eqref{stopping_condition} can be written as 
	\begin{equation}\label{stopping_condition_2}
    	\ex \left[k \, (x+W_{t + \tau})\, c(t + \tau) \, \Big| \, W_{t} = 0 \right]= \ex \Big[k \, (x + B_\tau)\, c(t + \tau)  \Big],
	\end{equation}
	where $B_s \coloneqq W_{t + s} - W_{t}, \, s \ge 0$ is a standard Brownian motion, independent of $\mathbb{F}_t$. It remains to observe that \eqref{stopping_condition_2} implies that $(y, t)$ also belongs to the stopping region by virtue of the following chain of inequalities:
	\begin{equation*}
    	\begin{split}
    	k \, y \, c(t) &= 
    	k \, x\,c(t) + k \, (y-x)\,c(t) \\
    	&\ge
    	\ex \Big[k \, (x + B_\tau)\,c(t + \tau)  \Big] + k \, (y-x)\,c(t)\\
    	&=\ex \Big[k \, (y + B_\tau)\,c(t + \tau)  \Big] + \ex \Big[k \, (x -y)\,c(t + \tau)  \Big]+ k \, (y-x)\,c(t)\\
    	&= \ex \Big[k \, (y + B_\tau)\,c(t + \tau)  \Big] + \ex \Big[k \, (y-x)\,(c(t)-c(t + \tau))  \Big]\\
    	&\ge \ex \Big[k \, (y + B_\tau)\,c(t + \tau)  \Big],
    	\end{split}
	\end{equation*}
	where the first inequality follows from the choice of $(x, t)$ and \eqref{stopping_condition}, \eqref{stopping_condition_2}, whereas the second inequality follows since $y>x$ and $c(\cdot)$ is a non-increasing function. Using the equivalence of \eqref{stopping_condition} and \eqref{stopping_condition_2}, and the fact that $\tau$ was chosen arbitrarily, concludes the proof.
\end{proof}

The proof of our last auxiliary result Lemma \ref{devilish_lemma} below relies on a generalized Fortuin-Kasteleyn-Ginibre (FKG) inequality developed by Barbato in \cite{FKG_inequality}, which we state here after introducing the necessary notions. Assume that $(\Omega, \mathcal{F}, \pr)$ is the canonical Wiener space with $(\Omega_T, \mathcal{F}_T, \pr)$ being its restriction to a finite time interval $[0,T]$ for fixed $T > 0$. This will be a standing assumption for the remainder of this section and is made without any loss of generality. We define the following partial ordering on $\Omega_T$: for $\omega_1, \omega_2 \in \Omega_T$, we say $\omega_1 \leq \omega_2$ if the inequality
    \begin{equation*}
        W_{t_2}(\omega_1) - W_{t_1}(\omega_1) \leq W_{t_2}(\omega_2) - W_{t_1}(\omega_2)
    \end{equation*}
    holds for all $0 \leq t_1 \leq t_2 \leq T$. A set $A \in \mathcal{F}_T$ is said to be \textit{increasing} if $\omega_1 \in A$ and $\omega_1 \leq \omega_2$ implies that $\omega_2 \in A$.  The following result is the so-called FKG inequality for Brownian motion. For its proof, see Theorem 4 of Barbato \cite{FKG_inequality}.
    
\begin{theorem}[Barbato (2005)] \label{FKG_inequality}
    If $A$ and $B$ are increasing sets contained in the sigma field $\mathcal{F}_T$, $T \ge 0$, we have 
    \begin{equation} \label{FKG_statement}
        \pr(A \cap B) \ge \pr(A) \, \pr(B).
    \end{equation} 
\end{theorem}

We are now ready to state and prove Lemma \ref{devilish_lemma}.

\begin{lemma}\label{devilish_lemma}
	Let $W_t, \, 0 \le t < \infty$, be a standard Brownian motion started at zero, consider the stopping time 
	\begin{equation}\label{hitting_time_boundary}
    	\tau \coloneqq \inf \left\{t \ge 0 : x + W_t \ge b(t)\right\}
	\end{equation}
	for an arbitrary measurable function $b: [0, \infty) \to [-\infty, \infty]$, and suppose $\pr(\tau > 0) > 0$.  Then $\pr(\tau > \sigma_y) > 0$ holds for every $y \in \mathbb{R}$, where
	\begin{equation} \label{hitting_time_less}
    	\sigma_y \coloneqq \inf\{t \ge 0 : x + W_t \leq y \}.
	\end{equation}
\end{lemma}
\begin{proof}
    If $y \ge x$, then $\sigma_y \equiv 0$ and the conclusion is trivial.  For $y < x$, we will apply Theorem \ref{FKG_inequality}. 
    In order to prove that $\pr(\tau > \sigma_y) > 0$ holds, we will show that the event $\{\tau > \sigma_y\}$ is a superset of two increasing sets, both of which have nonzero probability. To do that, we fix some $T > 0$ such that $\pr(\tau \ge T) > 0$, which is possible by the assumption of the lemma, and introduce the two sets 
    $$
    A \coloneqq \{\tau < T\} \text{ \ and \ } B \coloneqq \{\sigma_y \ge T\}
    $$ 
    in the notation of \eqref{hitting_time_less}, \eqref{hitting_time_boundary}.
    We claim that these sets are both increasing. Indeed, we have 
    \begin{align*}
        \omega_1 \leq \omega_2 &\implies \left \{t \in [0, T] : W_t(\omega_1) + x \ge b(t) \right\} \subseteq \left \{t \in [0, T] : W_t(\omega_2) + x \ge b(t) \right\} \\
        & \implies \tau(\omega_2) \leq \tau(\omega_1)
    \end{align*} 
    if $\tau(\omega_1) < T$, and therefore,
    $$
    \omega_1 \leq \omega_2, \, \omega_1 \in A \implies \omega_2 \in A,
    $$
    so that $A$ is increasing by definition.  Identical logic yields that $B$ is also increasing.  Thus, we have 
    \begin{align*}
    \pr(\tau > \sigma_y) & \ge \pr(\tau \ge T, \sigma_y < T) \\
    &= 1 - \pr(\tau < T) - \pr(\sigma_y \ge T) + \pr(\tau < T, \sigma_y \ge T) \\
    &\ge 1 - \pr(\tau < T) - \pr(\sigma_y \ge T) + \pr(\tau < T) \, \pr(\sigma_y \ge T) \\
    &= \pr(\tau \ge T) \, \pr(\sigma_y < T) \\
    &> 0,
    \end{align*} 
    where the inequality on the third line follows from the FKG inequality \eqref{FKG_statement}, and the last inequality from the fact that $\pr(\sigma_y < T) > 0$ for every $T > 0$. This concludes the proof.
\end{proof}

\section{Future research}\label{sec_future_research}

We conclude by formulating three possible extensions of our results, which we find interesting to explore in future research.

The first extension would be to investigate more general conditions on the reward function $f(\cdot)$, under which the statement of Corollary \ref{completely_solved_game} still holds. Note that for Nash equilibria in which both players receive finite expected payoffs, our proof relies heavily on Lemma \ref{boundary_stop_times}, which, under the linearity assumption, shows that the first hitting times of some (upper) boundaries are optimal stopping times. A natural question is under what assumptions, in addition to \ref{assm_1}--\ref{assm_3}, this still holds, i.e., one-sided hitting times are optimal.

The second extension would be to consider either a finite-time horizon analogue of the Dynkin game from Section \ref{model}, or a game with an extra discounting factor introduced to the model. We believe that both these modifications will create non-trivial Nash equilibria with finite expected payoffs.

The third extension, which is the most interesting from a probability theory perspective, is to consider the same game with a bounded reward function. In this case, only equilibria with finite expected payoffs are possible, and we firmly believe that, under appropriate conditions on the reward functions, there must exist non-trivial equilibria of this type. However, the analysis of such a game seems to be very demanding. Even for the results of the present paper, quite elaborate probabilistic analysis is required. The bounded reward setting is more challenging; it seems natural to consider the symmetric version of the optimal stopping problem \eqref{value_func_as_discounting} with ``endogenous discounting'' and apply fixed-point arguments to obtain a Nash equilibrium. This direction, as well as the finite-horizon game, are currently under investigation.
\\

\noindent
\textbf{Acknowledgments}

We are most grateful to Ioannis Karatzas for his very careful readings and many valuable suggestions. We thank Erik Ekström for suggesting to us a problem, still open, that motivated us to work along the present lines. Many thanks go to Milind Hegde and Shalin Parekh for pointing out relevant results that we used in the paper. We also thank the anonymous referees for very valuable comments that helped significantly improve the exposition of the paper, and for motivating us to explore the mixed strategy setting.

The authors gratefully acknowledge support from the National Science Foundation under grant NSF-DMS-20-04997.


\printbibliography

\end{document}